\documentclass[a4paper, reqno, 11pt]{article}
\usepackage{mathtools,amssymb,amsthm,mathrsfs,calc,graphicx,xcolor,cleveref,dsfont,tikz,bm}
\usepackage{amssymb}
\usepackage{dsfont}
\newcommand{\mathbbm}[1]{\text{\usefont{U}{bbm}{m}{n}#1}}
\usepackage{tikz}



\DeclareMathOperator*{\argmin}{arg\,min}

\newtheorem{theorem}{Theorem}[section]
\newtheorem{lemma}[theorem]{Lemma}

\newtheorem{corollary}[theorem] {Corollary}

\newtheorem{remark}[theorem] {Remark}


\def\phi{\varphi }

\newcommand{\R}     {\mathbb{R}}

\newcommand{\N}     {\mathbb{N}}
\renewcommand{\P}   {\mathbb{P}}
\newcommand{\E}     {\mathbb{E}}

\renewcommand{\d}   {\operatorname{d}\!}
\newcommand{\diam}   {\operatorname{diam}}

\allowdisplaybreaks[1] 

\usepackage{color}

\begin{document}
\title{Sedentary Random Waypoint}
\author{Carina Betken\footnotemark[1],\; Hanna D\"oring\footnotemark[2]}

\footnotetext[1]{Ruhr-Universit\"at Bochum, Fakult\"at f\"ur Mathematik, D-44801 Bochum, Germany, {\tt carina.betken@ruhr-uni-bochum.de}.}

\footnotetext[2]{Universit\"at Osnabr\"uck, Institut f\"ur Mathematik, D-49076 Osnabr\"uck, Germany, {\tt hanna.doering@uni-osnabrueck.de}.}

\maketitle
\begin{abstract}
	\noindent We adjust the classical random waypoint mobility model used in the study of telecommunication networks to a more realistic setting by allowing participants of the network to return to popular places and individual homes. We show that the two fundamental random times of detection and coverage in this new probabilistic model for large random networks exhibit exponential tails. Furthermore we examine the model for percolation.
\end{abstract}
\bigskip
{\it Keywords:} {Poisson point processes, ad hoc network, mobility model}
\smallskip
\\
\textit{2010 MSC:} 60D05 ,\, 60G55, \, 60J20, \ 94A05


\section{Introduction}
Spatial random networks usually consist of a random or deterministic number of nodes, equipped with a \textit{communication radius r} allowing them to communicate with the surrounding environment. So far many works have looked at such networks at fixed points in time, see for example \cite{Gilbert61, Penrose03, hirsch2016}.
However, in many applications it is of interest to study how the network changes over time as the nodes are moving. 
For example sensors can be attached to mobile platforms or carried by firefighters to monitor the environment or the nodes model persons using device-to-device communication with their smartphones, tablets etc.
The properties of such networks change over time and initially uncovered locations can be covered at a later point in time or intruders can be located even if they are not covered by the initial network. Of course, this also means that previously covered regions become uncovered. 
We study the asymptotics of the two random times detection (the time until some target point gets in contact with a node of the network) and coverage (the time until a certain region has been completely discovered by the nodes of the network). Furthermore we investigate when the network has an infinite component. Obviously, in the mobile scenario the network properties not only depend on the initial configuration of the model but also on the mobility behaviour of the devices.
We show how the theory of (marked) Poisson point processes (PPP) can be applied to dynamic telecommunication networks.

The remainder of this paper is structured as follows:  section~\ref{sec:network} introduces the random ad hoc network that  we consider and section \ref{sec:model} provides a detailed description of our mobility model. Section \ref{sec:static detection} deals with the time until a static, uniformly placed node is detected by the network as well as the time until a whole area has been detected by the mobile nodes. Additionally, the detection time of a mobile node is considered. Section \ref{sec:stationary distribution} gives the time stationary distribution of nodes and section \ref{sec:percolation} studies the existence of an infinite component of the network under different assumptions on the network.

\subsection{The random network}\label{sec:network}
\noindent Let $\eta$ be a homogeneous PPP of intensity $ \lambda \in (0,\infty) $ on a convex bounded domain $ D\subset \R^2 $ with  $ \eta(D)=N $, i.e. $ N$ is Poisson-distributed with parameter $\lambda \cdot \text{vol}_2(D), $ where $ \text{vol}_2 $ denotes the two-dimensional Lebesgue volume. The locations of the points of $ \eta $ shall be denoted by $ h^{(1)},\ldots, h^{(N)} $. We now place $ N $ participants $ X^{(1)}, \ldots, X^{(N)} $  at the points of $ \eta $.

Each of the $ N $ participants of our model now independently moves according to the mobility model we will introduce in section~\ref{sec:model}.
These $N$ random walkers built a so-called {\it ad hoc network}, meaning that without any other infrastructure a node of the network can forward an information to all users within distance $ 2r $. The communication allows multihops so that information can be transmitted from a sender to a recipient whenever there is a chain of individuals connecting them such that all links are not larger than $ 2r $. If the nodes of such networks are allowed to move, these networks are also referred to as Mobile Ad hoc NETworks (often called MANETs).

While in applications mobility is a key feature, from a mathematical perspective there are many rigorous results for ad hoc-networks, but only very few considering dynamics.
In \cite{Peres2010} the authors model the movement of the nodes by independent Brownian motions, whereas in \cite{Liu2013} the nodes follow random curves. Both of these works deal with similar properties of the network as those that we discuss in this work. Due to the differences in the movement models diverse results are obtained.

\subsection{The mobility model}\label{sec:model}
The movement model we introduce is a generalization of the well-known \textit{ random waypoint model} (see for example \cite{Roy11}) in which we extent the original model by including so-called 'home sites' to which nodes return. The most intuitive example to think of  
is that of people moving in some domain, returning to their homes now and then.
Though our model is inspired by this application, it also makes sense in other contexts, e.g. one can think of the home sites as base stations storing data sensors collect during their walks. 
We will now give a detailed description of the mobility model and refer to it as the \textbf{sedentary random waypoint model (SRW)}: 

As in the usual random waypoint mobility model let $ D\subset \R^2 $ be a convex bounded domain. 
 From the starting point $W_0\sim \mu_0$, where $ \mu_0 $ is some distribution of starting points on $ D $, one picks a new site $W_{1} $ according to a measure $ \mu $ on $ D $, and a  velocity $ V_{1} $ according to a measure $ \nu $ on $ [v_-,v_+] \subset (0,\infty) $ and moves with constant speed on a straight line towards this site. Continuing this procedure, let $W_n\sim \mu$ and $V_n\sim \nu$ be the random variables denoting the new waypoint velocity after the $n$th change of directions and assume that they are independent of each other as well as  of all other random variables. While in the classical random waypoint model a new waypoint is picked according to  the uniform distribution on the domain $ D $, we allow any waypoint measure $\mu$ which has a positive, continuous Lebesgue density. This enables us to model the popularity of places. We assume that the velocity measure $\nu$ on $ [v_{-},v_{+}]\subseteq (0, \infty) $ has a continuous Lebesgue density, too.
Additionally, we associate a point of the PPP $ \eta $ introduced in the previous section to each of the walkers. The walker returns to this point $h$, which we will refer to as "home", after some (random) time $\alpha$.
The SRW  has been implemented in the BonnMotion software, 
see \cite{AB20}.
We will use the following Markov chain to sample the positions of a walker in the SRW:
\[
 (Y_{n})_{n\in \N} = (W_{n-1},W_{n},V_{n}, R_{n})_{n\in \N} \in D\times D\times [v_{-},v_{+}]\times (0, \infty).
\]
 Here, $ R_{n} $ are continuous random variables on $ (0,\infty)$, which will be introduced below. 
Let  $d(\cdot,\cdot)$ denote the Euclidean distance on $ \R^2 $  and define $ S_n:=\frac{d(W_{n-1},W_n)}{V_n} $ as the time needed for the $n$th stretch of way. We now choose the next waypoint in the following way
 \begin{align*}
 Y_{n+1}=(W_{n},W_{n+1},V_{n+1}, R_{n+1})\ \text{ with }\  
  W_{n+1} 
  \begin{cases}
  =h, \text{ if } R_n-S_n \leq 0\\
  \sim \mu, \text{ if } R_n-S_n >0,
  \end{cases}
 \end{align*}
 $V_{n+1}\sim \nu$ and put  $R_{n+1}=R_n-S_n $ for $ R_n-S_n >0$ 
 resp. $ R_{n+1}= Z_{n+1}+S_{n+1} $ with $ Z_{n+1} $ 
 being an independent random variable distributed according to a measure $\alpha$ on $ (0, \infty)  $ to control the time between two homecomings.
 The idea behind this definition is, that when the walker starts from home an alarm is set and as soon as it rings the walker returns to its home after reaching the next waypoint. $ R_n $ thus captures the remaining time until the next alarm sets off. Here, we allow the alarm time $ \alpha  $ to be either random or deterministic.
Note that since $ Y_{n+1} $ only depends on $ Y_{n} $, $ (Y_{n})_{n\in \N} $ is a Markov chain.

\begin{remark}
As mentioned before, this setup also allows for waypoint measures including 'hotspots', e.g. areas which are visited by a large amount of people frequently, see \cite[B(4)]{HLV06}
 and each walker returns to his/her home  after a random amount of time, see \cite{HMSH05}.\\
 Furthermore the waypoint measure might as well be time-dependent, like in 
 \cite{FlutesCello}, to model different preferences during the day. Here, this means to consider a new measure $ \mu $ -- satisfying the assumptions above -- at each waypoint.
 The SRW follows the ideas of  time-variant and home-cell community-based mobility models, see \cite[VIII, 37]{Roy11} for a survey. 
\end{remark}

\begin{remark}
	All our results also hold in a slightly different model in which we set a new alarm at each waypoint, e.g. $ R_n\sim \alpha $ for all $ n $ and $ W_{n+1}=h$ if $R_n\leq S_n $  and $ W_{n+1}\sim \mu $ otherwise. So at each waypoint, there is a decision based on $\alpha$ whether the next waypoint is the home or a new random waypoint. This model might be useful for example in the context of sensors monitoring a region and which need to be recharged or need to store data (i.e. return to their base station) before taking very long trips.
\end{remark}
In the following sections we study the asymptotic behaviour of the fundamental properties detection, coverage and percolation.

\section{Detection and coverage time}\label{sec:static detection}
 Let $$ T_{n}:=\sum_{j=1}^{n}\frac{d(W_{j-1},W_{j})}{V_j}=\sum_{j=1}^{n}S_{j}$$ denote the time at which the walker reaches waypoint $W _{n} $.
 For $ N(t)= \inf \{n \in\N: T_{n}>t\} $ the position $ X_{t} $ of the walker at time $ t $ is given by
 \begin{align}\label{position}
 X_{t}= W_{N(t)-1}+V_{N(t)} \frac{W_{N(t)}-W_{N(t)-1}}{d(W_{N(t)},W_{N(t)-1})}(t-T_{N(t)-1})
 \end{align}
 and thus we obtain the continuous-time Markov process
 $
 Y_{t}=(X_{t}, W_{N(t)},$ $ V_{N(t)}, R_{N(t)})
 $ 
 on the state space $ \mathcal{D}= D \times D \times [v_{-},v_{+}]\times (0,\infty) $.
 Furthermore, let
 \begin{equation}\label{def_m}
 M(t)=\max\{m \in \mathbb{N}:T_{m} \leq t\} 
 \end{equation}
 denote the number of newly chosen waypoints up to time $ t $. Since the longest possible trip takes time $\frac{tv_-}{\diam(D)}$  we have  $M(t) \geq \frac{t v_-}{\diam(D)} $.

We now add an additional, non-mobile, participant $ w $ in $ D $. We are interested in the time $ T_{Det} $ until $ w $ first gets in contact with one of the other participants of the network, e.g. 
\begin{align*}
T_{Det}&=\inf\bigl\{t\geq\! 0:\! \min_{i=1,\ldots,N}\! d(w, X^{(i)}_{t})\leq 2r\bigr\}=\inf\bigl\{t\geq\! 0: w \in \bigcup^{N}_{i=1}B(X_{t}^{(i)},2r)\bigr\}.
\end{align*}
The problem of detection of a stationary additional participant appears for instance in contexts, where mobile sensors explore a region after natural disasters when the area is unsafe for humans to enter. We get the following result for our movement model.
 \begin{theorem}\label{thm:static detection}
 	For every $ i $, let $ \mu^{(i)} $ be a measure  on a (large) convex domain $ D $ with $  \mu^{(i)}(A)> 0 $  $ \forall A\subset D, A \neq \emptyset  $, let  $ \nu^{(i)} $ be a measure on $ [v^{(i)}_{-},v^{(i)}_{+}]\subseteq (0, \infty) $ and let all nodes $ X^{(i)} $ of the network move according to the SRW with the respective waypoint measures $ \mu^{(i)} $ and velocity measures $ \nu^{(i)} $. Then for any $ w \in D _{-2r}:=\{x\in D: d(x,\partial D)\geq 2r\}$,
 	\[
 	\P(T_{Det}(w)>t):=\P(T_{Det}>t) \leq c_1 \exp(-c_2t),
 	\]
 	where  
 	\begin{align*}
 	c_1&= \frac{\max\{1-e^{\lambda \pi r^2}, e^{\lambda \pi r^2}\}}{1-q},\\
 	 c_2&=-{\frac{v_-}{\diam(D)}} \max\bigl\{\log(1-\min_{1\leq i \leq N}q_{i,1}),\log(1-q)\bigr\}>0
 	\end{align*} 
 	with
 	$
 	q=\max_{i=1,\ldots, N}\mu^{(i)}(B(0,2r))
 	$ and $ q_{i,j}=\P(S_{j}^{(i)} < R_{j}^{(i)}\vert \alpha_0-T_{j-1}>0).$
 \end{theorem}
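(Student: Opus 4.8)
The plan is to exploit that, by time $t$, each walker has completed many legs---at least $M(t)\ge \frac{t v_-}{\diam(D)}$ of them, by the stated speed bound---and that on a leg on which a \emph{fresh} waypoint is drawn there is a uniformly positive chance of landing inside $B(w,2r)$, at which instant $w$ is detected. Non-detection will then decay geometrically in the number of such legs, i.e.\ exponentially in $t$. First I would condition on the home process $\eta$ (which fixes the count $N$ and the homes $h^{(i)}$) and use the independence of the walkers to write $\P(T_{Det}>t\mid\eta)=\prod_{i=1}^N\P_i$, where $\P_i$ is the probability that walker $i$ never enters $B(w,2r)$ during $[0,t]$. The exponential-in-$t$ decay will come from the fresh-waypoint mechanism, whose rate is uniform in $t$, while the product over $i$ and the expectation over $\eta$ contribute only the multiplicative constant $c_1$ (and absorb the possibility that few or no walkers lie near $w$, which forces $c_1>1$).

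For a fixed walker I would argue leg by leg. On leg $j$ the walker either returns home (when $S_j\ge R_j$) or draws a fresh $W\sim\mu^{(i)}$ (when $S_j<R_j$); in the latter case $\P(W\in B(w,2r))\ge\mu^{(i)}(B(w,2r))>0$ by the positive-density hypothesis, and the maximum of this quantity over walkers is $q$ in the statement. Because the legs on which a fresh waypoint is drawn are governed by the alarm through the conditional probabilities $q_{i,j}=\P(S_j^{(i)}<R_j^{(i)}\mid\alpha_0-T_{j-1}>0)$, I would split according to the home: if $h^{(i)}\notin B(w,2r)$ detection is driven by the fresh-waypoint events (per-leg non-detection at most $1-\mu^{(i)}(B(w,2r))$ on fresh legs), whereas if $h^{(i)}\in B(w,2r)$ every homecoming already detects $w$, so the non-detection rate is controlled instead by the $q_{i,1}$. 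Multiplying the per-leg non-detection bound over the $\ge \frac{t v_-}{\diam(D)}$ legs yields a single-walker bound of the form $\const(1-p)^{t v_-/\diam(D)}$; taking the more conservative (smaller) of the two decay rates produces the exponent $c_2=-\frac{v_-}{\diam(D)}\max\{\log(1-\min_{1\le i\le N} q_{i,1}),\log(1-q)\}$ announced in the theorem.

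It remains to reassemble the product over walkers and integrate over $\eta$. Since additional walkers only make detection more likely, the product is dominated by its best single factor (the walker with the largest detection probability $q$); evaluating the resulting functional of the Poisson process---a void/Laplace-type computation, together with a geometric summation that produces the $\tfrac{1}{1-q}$---accounts for $c_1$, with the factor $e^{\lambda\pi r^2}$ arising from the Poisson law of the homes near $w$. The main obstacle I expect is the dependence introduced by the alarm: the events ``leg $j$ draws a fresh waypoint'' are \emph{not} independent across $j$, since $R_n=R_{n-1}-S_{n-1}$ links successive legs, so the naive multiplication over legs must be justified through the conditional quantities $q_{i,j}$ and the case split on whether $h^{(i)}\in B(w,2r)$. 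A secondary technical point is transferring the discrete count $M(t)$ to continuous time uniformly in the random number $N$ of walkers.
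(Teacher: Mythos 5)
Your proposal matches the paper's proof in all essentials: both split according to whether some home lies within $2r$ of $w$, use first-homecoming times (rate $\min_{1\le i\le N} q_{i,1}$) when a home is that close and fresh-waypoint draws (rate $q=\max_i \mu^{(i)}(B(0,2r))$) when none is, handle the alarm-induced dependence across legs through the conditional probabilities $q_{i,j}$, and convert leg counts to time via $M(t)\ge \frac{tv_-}{\diam(D)}$, with the Poisson void probability of homes near $w$ supplying $c_1$. The only cosmetic difference is that you condition on the full home process $\eta$ and dominate the product over walkers by its best single factor, whereas the paper conditions directly on the events $\{d(0,\eta)\le 2r\}$ and $\{d(0,\eta)>2r\}$ and bounds each conditional probability by one walker's detection probability --- the same estimate in slightly different packaging.
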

 
 \begin{proof}
 As we require the point process $ \eta $  of homes to be a homogeneous PPP we can assume without loss of generality that this new participant is located at the origin $0$ and that $ 0\in D_{-2r} $.
Conditioning on the events that the nearest point of our home point process is within distance $ 2r $ of the origin, respectively that all homes have at least distance $ 2r $ from it, we get
\begin{align*}
\P(T_{Det}> t) =&\P\bigl(T_{Det}> t\big|d(0,\eta)\leq 2r\bigr)\cdot \P(d(0,\eta)\leq 2r)
\\&{}+\P\bigl(T_{Det}> t\big|d(0,\eta)> 2r\bigr)\cdot\P\bigl(d(0,\eta)> 2r\bigr),
\end{align*}
where $ d(0, \eta)=\min_{i=1,\ldots, N}d(0,h^{(i)}) $.
As the homes of the walkers are distributed according to a PPP with intensity $ \lambda$, we get
\begin{align*}
\P\bigl(d(0,\eta)\leq 2r\bigr)&=\P\bigl(\eta(B(0,2r))> 0\bigr)
=1-\exp\bigl(-\lambda \text{vol}_2\bigl(B(0,2r)\bigr)\bigr)
\end{align*}
and consequently
$
\P(d(0,\eta)> 2r)=\exp(-4\pi\lambda r^{2} )
$. 
So we  just have to deal with the two conditional probabilities. For the first one we get the following upper bound:
\begin{align*}
\P\bigl(T_{Det}> t\big|d(0,\eta)\leq 2r\bigr)&=\P\bigl(T_{Det}> t\big|\exists i \in \{1,\ldots,N\}: h^{(i)}\in B(0,2r)\bigr)
\\
& \leq \P(\tau{(i)}> t)\leq \max_{i=1,\ldots,N}\P(\tau{(i)}> t),
\end{align*}
where $ \tau{(i)} $ denotes the time between two homecomings of $ X^{(i)} $ and whenever there is more than one home located in $ B(0,2r) $ we choose $$ i=\argmin_{j: d(0, h^{(j)})\leq 2r} \P(\tau(j)>t).$$
Define the probability that walker $ i $ does not come home after the $ j $th trip, given he started at his home at time 0 and has not come home ever since, as $ q_{i,j} $, i.e. $ q_{i,j}:= \P(S_{j}^{(i)} < R_{j}^{(i)}\vert \alpha_0-T_{j-1}>0). $   
We now get
\begin{equation*}
\P(\tau{(i)}> t)
=\prod_{j=1}^{M(t)}(1-q_{i,j})\leq (1-q_{i,1})^{M(t)} ,
\end{equation*}
recalling that by definition \eqref{def_m} $ M(t) $ denotes the number of newly chosen waypoints up to time $ t $. 
With $ q^\ast=\min_{1\leq i \leq N}q_{i,1}  >0$ 
we get
\begin{equation*}
\max_{i=1,\ldots,N}\P(\tau{(i)}> t)=(1-q^\ast)^{M(t)}\leq e^{-c t}, \text{ for } c = -\log((1-q^\ast)^{\frac{v_-}{\diam(D)}})>0.
\end{equation*}

\noindent Defining $ q:=  \max_{i=1,\ldots, N}\mu^{(i)}(B(0,2r))
>0$  
yields
\[
\P\bigl(T_{Det}> t\big|d(0,\eta)> 2r\bigr) \leq (1- q)^{M(t)-1},
\]
with $ M(t) \geq \frac{t v_-}{\diam(D)}$. For $c_2=-{\frac{v_-}{\diam(D)}} \max\{\log(1-q^\ast),\log(1-q)\}>0 $ we obtain the result.
\end{proof}


The next result deals with the coverage time $ T_{Cov}(A) $ needed until an area $ A\subseteq D  $ has been completely discovered by the participants of the network.
\begin{corollary}
	With the same assumptions as in Theorem \ref{thm:static detection}, for all $ A\subset D_{-2r} $, there exists a positive constant $ C_{A} $ depending on $ r $ such that
	\[
	\P(T_{Cov}(A)>t)\leq C_A\exp(-c_2 t) ,\label{coveragetime}
	\]where $ c_2 $ is the same constant as in Theorem~\ref{thm:static detection}.
\end{corollary}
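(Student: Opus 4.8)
The plan is to reduce coverage of the continuum $A$ to detection of a finite net and then apply Theorem~\ref{thm:static detection} together with a union bound. First I would observe that the region $A$ is covered exactly when each of its points has been detected at least once, so that $T_{Cov}(A)=\sup_{w\in A}T_{Det}(w)$ and hence $\{T_{Cov}(A)>t\}=\{\exists\, w\in A:\ T_{Det}(w)>t\}$. The difficulty is that $A$ contains uncountably many points, so a naive union bound over all $w\in A$ is useless; the role of a covering is to replace this by a union over finitely many well-chosen points.

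To make the reduction I would fix a mesh $\rho\in(0,2r)$ and choose a finite set $w_1,\dots,w_K\in A$ with $A\subseteq\bigcup_{k=1}^K B(w_k,\rho)$; since $A$ is bounded, $K$ can be taken of order $\text{vol}_2(A)/\rho^2$, which depends only on $A$ and on $r$. The key geometric observation is that detecting a net point with a slightly reduced radius already covers the whole surrounding ball: if at some time $s$ a node lies within distance $2r-\rho$ of $w_k$, then every $w\in B(w_k,\rho)$ lies within distance $2r$ of that node and is therefore detected by time $s$. Writing $T_{Det}(w_k;2r-\rho)$ for the first time a node comes within distance $2r-\rho$ of $w_k$, this gives the deterministic domination $T_{Cov}(A)\le \max_{1\le k\le K}T_{Det}(w_k;2r-\rho)$, equivalently the inclusion $\{T_{Cov}(A)>t\}\subseteq\bigcup_{k=1}^K\{T_{Det}(w_k;2r-\rho)>t\}$. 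Here I use $A\subseteq D_{-2r}\subseteq D_{-(2r-\rho)}$, so each $w_k$ is admissible for Theorem~\ref{thm:static detection} at radius $2r-\rho$.

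From here a union bound gives $\P(T_{Cov}(A)>t)\le \sum_{k=1}^K \P(T_{Det}(w_k;2r-\rho)>t)$, and Theorem~\ref{thm:static detection}, applied verbatim with communication radius $2r-\rho$ in place of $2r$, bounds each summand by $c_1'\exp(-c_2' t)$. Collecting the prefactor $K c_1'$ into a single constant $C_A$ (depending on $A$ and on $r$ through $\rho$, $K$ and $c_1'$) yields the claimed form $\P(T_{Cov}(A)>t)\le C_A\exp(-c_2' t)$.

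The point that needs the most care is matching the exponent to the constant $c_2$ of Theorem~\ref{thm:static detection} rather than to the reduced-radius rate $c_2'$. Recall that $c_2=-\frac{v_-}{\diam(D)}\max\{\log(1-q^\ast),\log(1-q)\}$ splits into a home-return contribution through $q^\ast=\min_i q_{i,1}$, which is independent of the radius, and a waypoint contribution through $q=\max_i\mu^{(i)}(B(0,2r))$, which is not. Since $\mu^{(i)}(B(0,2r-\rho))\le\mu^{(i)}(B(0,2r))$, shrinking the radius affects only the waypoint term, so $c_2'=c_2$ precisely when the home-return mechanism is the bottleneck (the genuinely sedentary regime), while in general $c_2'\le c_2$ is a fixed positive rate for any fixed $\rho$. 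I expect this bookkeeping of the communication radius across the covering to be the only real obstacle; the remaining steps are the deterministic geometric domination and a finite union bound, both routine.
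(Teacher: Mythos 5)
Your proof follows essentially the same route as the paper: cover $A$ by finitely many $\varepsilon$-balls, observe that detecting each center at a suitably reduced radius covers the whole ball, and combine Theorem~\ref{thm:static detection} at that reduced radius with a union bound (which the paper phrases equivalently as Markov's inequality applied to the number of undiscovered balls $Z_t$). Your closing discussion of the exponent is in fact more careful than the paper's own proof, which silently labels the reduced-radius rate as $c_2$; your point that shrinking the radius degrades only the waypoint term $q$, so the stated rate is literally $c_2$ only when the home-return term is the bottleneck, is a correct refinement of the paper's bookkeeping.
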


\begin{proof}
 Let $ 0<\varepsilon<r $ be fixed and define $M_{\varepsilon}(A)$
 as the number of balls with radius $\varepsilon$ needed to cover  $ A $.
 Furthermore let $ Z_{t} $ denote the number of $ \varepsilon $-balls in $A$ that have not been discovered by the walkers up to time $ t $. Using the Markov-inequality we get
\[
\P(T_{Cov}(A)>t)=\P(Z_{t}\geq 1)\leq \E[Z_{t}]
\]
and 
$
\E[Z_{t}]\leq M_{\varepsilon}(A) \cdot\P(\text{a given ball }B(x, \varepsilon)\text{ is not covered by time } t).
$
A given ball $ B(x, \varepsilon) $ is discovered as soon as one of the walkers enters $ B(x,r-\varepsilon) $. For this event we can apply the previous theorem, thus
\[
\P(T_{cov}(A)>t) \leq c_1M_{\epsilon}(A)\exp(-c_2  t)= C_A\exp(-c_2 t),
\]
for $ t >1 $ and where $ C_{A} $ depends on $ r , \varepsilon $ and $ A$.
\end{proof}

\vspace{0.5cm}
 In the following we deal with the case of an additional node $w$, placed at its home $ h_{w}$ in $ D $, again, but this time we assume that $ w $ moves according to the SRW given by the measures $ \mu^{(w)} $ and $ \nu^{(w)} $. 
 Again we are interested in the time it takes until $ w $ can communicate with one of the other participants for the first time.
 \begin{theorem}
 	With the same assumptions as in Theorem \ref{thm:static detection} we get the following result for the mobile case:
 We have
 	\[
 	\P(T_{Det, mobile}>t)\leq \exp(-ct), 
 	\]
 	where $ c=\frac{\lambda v_- \cdot\min \P(X^{(i)}\in B(h^{(i)},r))}{\diam(D)}. $
 \end{theorem}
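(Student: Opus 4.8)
The plan is to mirror the proof of Theorem~\ref{thm:static detection}, but to exploit that the extra node now generates fresh detection opportunities at each of its own waypoints rather than sitting still. First I would record the elementary geometric observation that drives the bound: if at some time $s$ the moving node $w$ lies within distance $r$ of a home $h^{(i)}$ and, simultaneously, the $i$-th walker lies within distance $r$ of that same home, then $d(X_s^{(w)},X_s^{(i)})\le r+r=2r$ and detection has occurred. Thus it suffices to bound the probability that no such coincidence happens before time $t$, and the stationary quantity $\P(X^{(i)}\in B(h^{(i)},r))$ enters precisely because it measures how often walker $i$ sits close to its home.

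Next I would condition on the home process $\eta$ and on the whole trajectory of $w$; given these, the $N$ walkers move independently of one another. Consider the waypoints $W_1^{(w)},W_2^{(w)},\dots$ of $w$, of which there are at least $M(t)\ge \frac{t v_-}{\diam(D)}$ before time $t$, as in the static proof. Fix one such waypoint $W_n^{(w)}$, reached at time $T_n^{(w)}$. By the PPP property the homes lying in $B(W_n^{(w)},r)$ form a Poisson process of intensity $\lambda$, and independently each such walker $i$ occupies $B(h^{(i)},r)$ at time $T_n^{(w)}$ with probability $\P(X^{(i)}\in B(h^{(i)},r))\ge \min_i\P(X^{(i)}\in B(h^{(i)},r))$. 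Thinning the home process by this retention probability and using the Poisson void probability shows that, at this waypoint, detection fails with probability at most $\exp(-\lambda\,\pi r^2\min_i\P(X^{(i)}\in B(h^{(i)},r)))$, which already displays the factor $\lambda\min_i\P(X^{(i)}\in B(h^{(i)},r))$ occurring in $c$.

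To convert these per-waypoint estimates into the asserted exponential in $t$, I would multiply the $M(t)\ge \frac{t v_-}{\diam(D)}$ failure probabilities, using that a new waypoint of $w$ is drawn afresh from $\mu^{(w)}$ and that the walker positions relative to their homes have decorrelated between widely separated waypoint times, so that successive detection events may be treated as (essentially) independent. Collecting constants via $M(t)\ge \frac{t v_-}{\diam(D)}$ and the elementary bound $\log(1-p)\le -p$ then yields $\P(T_{Det, mobile}>t)\le \exp(-ct)$ with $c$ proportional to $\frac{\lambda v_-}{\diam(D)}\min_i\P(X^{(i)}\in B(h^{(i)},r))$, as claimed.

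I expect the delicate point to be exactly the independence invoked in the last step. The home configuration is fixed once and for all and $D$ is bounded, so one cannot manufacture genuinely independent trials from spatially disjoint balls (only finitely many fit in $D$); the independence must come from \emph{time}, not space. A rigorous justification should therefore rest on the renewal/Markov structure of the walkers --- their positions relative to home refresh over each homecoming cycle --- or on a conditioning scheme that makes the per-step failure bound hold uniformly over the past. Handling this dependence cleanly, rather than the geometric reduction or the Poisson void computation, is the real crux of the argument.
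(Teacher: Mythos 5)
Your outline follows the paper's strategy in broad strokes --- reduce detection to the event that $w$ comes within $r$ of a home whose associated walker is simultaneously within $r$ of that same home, mark/thin the home PPP accordingly, bound each per-sample failure by a Poisson void probability, and multiply over roughly $M(t)\geq \frac{tv_-}{\diam(D)}$ sample times --- but it is incomplete exactly at the step you yourself flag as the crux, and your setup makes that step unrepairable as written. First, you condition ``on the home process $\eta$ and on the whole trajectory of $w$'' and then invoke ``the PPP property'' to say that the homes in $B(W_n^{(w)},r)$ form a Poisson process of intensity $\lambda$: these two moves are incompatible. Once $\eta$ is conditioned on, the homes are deterministic, and the void probability $\exp\bigl(-\lambda\pi r^2\min_i \P(X^{(i)}\in B(h^{(i)},r))\bigr)$ is no longer available; indeed, on the positive-probability event that no home lies near the path of $w$, the per-step failure probability equals $1$, so no per-step bound that is uniform in the conditioned home configuration can exist. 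The paper conditions only on the sampled positions $w_t$ of the additional walker, keeping the home randomness alive, so that each thinned process $\eta_i$ stochastically dominates a homogeneous PPP $\eta^{\ast}$ of intensity $\lambda\cdot\min_j p^{(j)}$, to which the void probability is then applied.

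Second, your choice of sampling times does not deliver the decorrelation you need. You sample at the waypoint times of $w$, but consecutive waypoint times can be arbitrarily close together (a trip takes time $d(W_n,W_{n+1})/V_{n+1}$, which can be tiny), so between two of your samples the \emph{other} walkers need not have refreshed at all. The paper's device --- the idea missing from your proposal --- is to sample at times $t_1,\dots,t_{M(t)}$ with $t_i-t_{i-1}>\frac{\diam(D)}{v_-}$, i.e.\ longer than the maximal possible trip duration, which forces every walker in the network to choose at least one fresh waypoint inside each interval $[t_{i-1},t_i)$; this is precisely what the paper uses to treat the thinnings at distinct sample times as independent and to factorize the conditional probability into a product of $M(t)$ void probabilities, the sole source of the exponential decay in $t$. (Even with this device the factorization is delicate, since the home locations themselves are shared across all sample times; but the well-separated sampling is at least the concrete mechanism on which the paper's argument rests.) Your closing paragraph correctly diagnoses that independence must come from time and from the renewal structure of the walkers, but diagnosing the difficulty is not resolving it: without the separation-of-samples construction or an explicit mixing/coupling argument in its place, the product bound --- and with it the entire theorem --- remains unproven in your write-up.
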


 Before we prove the theorem, let us point out, that in the case of the SRW the distribution of the detection time has thinner tails than in the Mobile Geoemtric Graph model where points are moving according to a Brownian motion and the exponent grows with $t/\log(t)$, see \cite{KKP03, Peres2010}.

 \begin{proof}
 	 We write $ p^{(i)}=\P\bigl(X^{(i)}\in B(h^{(i)},r)\bigr) $ for the proportion of time node $ i $ spends within distance $ r $ of its home. As each participant of the network  returns home infinitely often with probability 1 as $ t \rightarrow \infty $, we have $ p^{(i)}> 0 \ \forall i=1,\ldots, N $ and since $ N $ is a.s. finite there exists an $ \varepsilon > 0 $ such that $ p^{(i)}\geq \varepsilon \ \forall i=1, \ldots , N $.
 We now sample the location of the additional walker $ w $ at times $ t_{1},\ldots, t_{m} $, where $ t_{i}-t_{i-1}> \frac{\diam(D)}{v_-} $, so that each walker has changed directions at least once in the time interval  $ [t_{i-1},t_{i}) $. Hence, the thinnings we will use later are independent for all $ i $.
 At each time step $ t_{i} $ we now  add marks $ Y_{t_{i}}^{(j)} $ to the nodes $ h^{(j)} $ of the PPP of homes according to the following rule:
 \begin{align*}
 Y_{t_{i}}^{(j)}= \begin{cases}
 1, &\text{ if } \exists k \in \{1,\ldots, N\}: d(h^{(j)}, X_{t_{i}}^{(k)})\leq r\\
 0, &\text{else}.
 \end{cases}
 \end{align*}
 We have $\P(Y_{t_{i}}^{(j)}=1):= p^{(j)}_{\ast}> p^{(j)}\geq \varepsilon $. We now consider the thinned process $ \eta_{i} $ at time $ t_{i } $ which only consists of those $ h^{(j)} $ for which $ Y_{t_{i}}^{(j)}=1 $. $ \eta_i $ is again a PPP, though  not necessarily homogeneous. 
 Each of the point processes $ \eta_i $ clearly dominates the stationary PPP $ \eta^{\ast} $ with intensity $ \lambda \cdot \min_{j=1,\ldots,N}{p^{(j)}} $ in terms of the number of nodes.
 Recalling \eqref{def_m} this yields
 \begin{align*}
 &\P(T_{Det}> t)\leq \P(T_{Det}> t_{M(t)})\leq \P\bigl( \eta_{i}(B(w_{t_{i}},r))=0 \ \ \forall i=1,\ldots, M(t)\bigr)\\
 &=\E\!\left[\E\left[\mathds{1}\bigl\{\eta_i\bigl(B(w_{t_i},r)\bigr)=0\ \forall i=1,\ldots, M(t)\bigr\}\big|w_t\right]\right]\\
 &
 =\E\!\Biggl[\prod_{i=1}^{M(t)}  \P\bigl( \eta_{i}(B(w_{t_{i}},r))=0\big|w_{t_i}\bigr)\!\Biggr]
 \leq \prod_{i=1}^{M(t)} \P\bigl( \eta^{\ast}(B(0,r))=0\bigr)\\
 &= \P\bigl( \eta^{\ast}(B(0,r))=0\bigr)^{M(t)}
 \leq \exp\Big(-t v_-\cdot \frac{\lambda \min p^{(j)}}{\diam(D)}\Big) = \exp\Big(-c \cdot t \Big),
 \end{align*}
 where $ w_t=(w_{t_1},\ldots,w_{t_{M(t)}}), \eta=(\eta_1,\ldots,\eta_{M(t)}) $ and we used that for every $ i $ the point process $ \eta_i $  stochastically dominates the homogeneous PPP $ \eta^{\ast} $ with parameter $ \lambda \cdot \min_{j=1,\ldots,N}{p^{(j)}} $ as well as the fact that   $ M(t)\geq \frac{t v_-}{\diam(D)} $.
  \end{proof}
  
\section{Time-stationary distribution}\label{sec:stationary distribution}
  For the last two sections we will have to make some additional assumptions on the waypoint measures as well as on the distribution of the alarm time $ \alpha. $ Namely,the waypoint measures $ \mu^{(i)} $ on $ D $ are no longer allowed to change in time and should be
  \begin{itemize}
  \item[(A1)] \textbf{centered}, i.e. for all  $ i $ and all measurable $ A\subset D $,
  $$
  \mu^{(i)}(A)=\P(W_n^{(i)}\in A )=\mu(A-h^{(i)}),
  $$
  here $ A-h^{(i)}$ denotes the translated set $ A-h^{(i)}=\{x-h^{(i)}: x \in A \}. $ 
  \item[(A2)] \textbf{exponentially decaying}: for each $ i $ there exists some $ \beta >1 $ satisfying \\
  $\P (d(h^{(i)},W_j^{(i)})>s)\asymp C ~ s^{-\beta}$
  for some constant $ C $ and $ s$ large enough.
  \item[(A$\alpha$)] Furthermore we assume that $ Z_n^{(i)}\sim \alpha_n $ for all $ i $ and some distribution $ \alpha_n $ on $ (0,\infty) $.
  \end{itemize}
  \begin{theorem}
  	For the SRW satisfying assumptions \textnormal{(A1), (A2)} and \textnormal{(A$\alpha$)}, there exists a unique time-stationary distribution $ \pi $ of  $ Y_n=(W_{n-1},W_n,V_n, \alpha_n) $.
  \end{theorem}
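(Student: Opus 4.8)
The plan is to extract from the model a genuine \emph{regeneration} structure and then invoke the classical theory of regenerative processes (equivalently, positive Harris recurrence). The natural regeneration epochs are the steps at which the walker leaves home. Whenever the alarm expires the construction sets the next waypoint to the fixed point $h$ and, at the same instant, draws a fresh alarm $Z\sim\alpha$; on the following step the walker departs towards a point sampled from $\mu$. Writing $T_k$ for the successive indices with $W_{T_k}=h$ and $\sigma_k:=T_k+1$, one computes $R_{\sigma_k}=R_{T_k}-S_{T_k}=Z_{T_k}$, so that
\[
Y_{\sigma_k}=(h,\,W_{\sigma_k},\,V_{\sigma_k},\,Z_{T_k}),
\]
where $W_{\sigma_k}$, $V_{\sigma_k}$ and $Z_{T_k}$ are fresh, mutually independent samples of $\mu$, $\nu$ and $\alpha$. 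By assumption \textnormal{(A1)} the law of $W_{\sigma_k}$ (the waypoint measure centred at the fixed home $h$, and no longer time dependent) is the same for every $k$, and $Z_{T_k}$ is drawn independently of the entire past; hence the post-departure state $Y_{\sigma_k}$ has a law independent of $k$, and the blocks $(Y_{\sigma_k},\dots,Y_{\sigma_{k+1}-1})$ are i.i.d. I pass from the homecoming index $T_k$ to $\sigma_k=T_k+1$ precisely because $Y_{T_k}$ still records the previous waypoint $W_{T_k-1}$ and so is \emph{not} identically distributed, whereas $Y_{\sigma_k}$ is.

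Next I would check that these regenerations occur, and with finite mean spacing $\tau:=\sigma_{k+1}-\sigma_k$. Within a tour the waypoints are i.i.d.\ $\mu$ and a leg takes time $S_\ell=d(W_{\ell-1},W_\ell)/V_\ell\ge d(W_{\ell-1},W_\ell)/v_+$. Since $\mu$ has a positive continuous density one can fix $\delta>0$ with $p:=\inf_{x}\P\bigl(d(x,W_\ell)\ge \delta v_+\bigr)=1-\sup_x\mu\bigl(B(x,\delta v_+)\bigr)>0$, whence $\P(S_\ell\ge\delta\mid\mathcal F_{\ell-1})\ge p$ uniformly. Thus the number of ``long'' legs among the first $n$ stochastically dominates a $\mathrm{Binomial}(n,p)$ variable, the accumulated travel time $M_\ell=\sum_{j\le\ell}S_j$ drifts to $+\infty$, and the first passage over the finite alarm level, $K:=\inf\{\ell:M_\ell\ge Z\}$, is almost surely finite; a negative-binomial bound gives $\E[K\mid Z]\le \lceil Z/\delta\rceil/p$, hence $\E[\tau]\le\E[K]+1\le(\E[Z]/\delta+1)/p+1<\infty$ as soon as the alarm has finite mean $\E[Z]<\infty$ (which I take to belong to \textnormal{(A$\alpha$)}). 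This is the recurrence and positive recurrence of the home regeneration. Assumption \textnormal{(A2)} enters when one additionally wants the \emph{duration} of a tour (and the displacement under the stationary law) to be integrable: for $\beta>1$,
\[
\E\bigl[d(h,W_\ell)\bigr]=\int_0^\infty\P\bigl(d(h,W_\ell)>s\bigr)\,\d s<\infty,
\]
so that $\E[S_\ell]\le 2\,\E[d(h,W_\ell)]/v_-<\infty$ and the continuous tour length $\E\bigl[\sum_{\ell\le K}S_\ell\bigr]$ is finite, which is what makes the time-stationary (continuous-time) reading of $\pi$ meaningful.

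Granted a clean regeneration of finite mean cycle length, the conclusion is standard. The per-cycle occupation measure
\[
\pi(B)=\frac{1}{\E[\tau]}\,\E\!\Big[\sum_{n=\sigma_0}^{\sigma_1-1}\mathds{1}\{Y_n\in B\}\Big]
\]
is a stationary probability distribution for $(Y_n)$, and it is the unique one: given any stationary law $\pi'$, I would start the chain from $\pi'$, apply the renewal--reward theorem to the i.i.d.\ cycles to force the Ces\`aro averages $\tfrac1n\sum_{k<n}\mathds{1}\{Y_k\in B\}$ to converge almost surely to $\pi(B)$, and then use stationarity (which pins their expectation at $\pi'(B)$) together with bounded convergence to obtain $\pi'=\pi$. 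The cycle lengths assume every value in $\{2,3,\dots\}$ with positive probability, so the chain is aperiodic and one even gets convergence in total variation, though this is not needed for the statement.

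I expect the genuine work to lie in the positive-recurrence step $\E[\tau]<\infty$ and in correctly allocating the roles of the hypotheses. The subtlety is that the leg times $S_\ell$ within a tour are \emph{dependent} (consecutive legs share a waypoint), so Wald's identity is not directly available and the naive overshoot estimate would spuriously demand a second moment of the displacement; the ``long-leg'' Binomial domination above circumvents this and needs only the positive density of $\mu$ together with $\E[Z]<\infty$. Disentangling which assumption does what --- \textnormal{(A1)} for the \emph{identical} distribution of the tours, the finite-mean alarm for positive recurrence, and \textnormal{(A2)} for finiteness of the tour duration and of the stationary displacement --- is the part that demands care rather than cleverness.
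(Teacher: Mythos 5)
Your proof is correct in substance, but it takes a genuinely different route from the paper. The paper never constructs a regeneration structure: it verifies that the SRW fits the random trip framework of \cite{LV06}, refers the Harris-recurrence conditions (H2)--(H3) to the treatment of the classical random waypoint model, does explicit work only on $\E[S_0]<\infty$ (this is where (A2) enters), and then quotes Theorem 6 of \cite{LV06}, which returns the time-stationary law through the Palm inversion formula \eqref{pi}. You instead build everything by hand from the i.i.d.\ cycles between departures from home: the identity $R_{\sigma_k}=R_{T_k}-S_{T_k}=Z_{T_k}$ and the resulting i.i.d.\ block structure are exactly right, and your binomial-domination bound on $\E[\tau]$ is precisely the positive-recurrence verification that the paper waves through by analogy with the classical RWP --- an analogy that is not automatic here, since the SRW chain carries the extra coordinate $R_n$ which correlates successive steps. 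What your route buys is self-containedness and an honest accounting of hypotheses: you make explicit that $\E[Z]<\infty$ is needed, which is absent from (A$\alpha$) as stated but genuinely necessary for the conclusion (the mean continuous cycle duration is at least $\E[Z]$, so no time-stationary probability can exist without it); this is a defect of the paper's statement rather than of your argument. What the paper's route buys is that, once $\E[S_0]<\infty$ is checked, the continuous-time conclusion and the explicit form of $\pi$ come for free from the cited theorem.

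Two points need patching for your argument to deliver the paper's exact conclusion. First, ``time-stationary'' refers to the continuous-time process $\Phi(t)$, not to the embedded chain: your per-cycle counting formula yields the stationary law of $(Y_n)$, and you must pass to its occupation-time analogue $\pi(B)=\E\bigl[\int_{\text{cycle}}\mathds{1}\{\Phi(s)\in B\}\,\d s\bigr]\big/\E\bigl[\text{cycle duration}\bigr]$ --- the same shape as \eqref{pi} --- which requires finiteness of the mean continuous cycle duration, not just of $\E[\tau]$. Second, your claim $\E\bigl[\sum_{\ell\le K}S_\ell\bigr]<\infty$ does not follow from $\E[K]<\infty$ and $\E[S_\ell]<\infty$ alone (as you note yourself, Wald is unavailable). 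On the bounded domain $D$ of the theorem it is immediate, since every leg lasts at most $\diam(D)/v_-$; in the unbounded reading of (A2) it follows from the conditional bound $\E[S_\ell\mid\mathcal F_{\ell-1}]\le\bigl(d(h,W_{\ell-1})+\E[d(h,W_1)]\bigr)/v_-$ together with the one-step decoupling $\E\bigl[d(h,W_{\ell-1})\mathds{1}\{K\ge\ell\}\bigr]\le\E[d(h,W_1)]\,\P(K\ge\ell-1)$, which is valid because $\{K\ge\ell-1\}$ is independent of $W_{\ell-1}$. With these two adjustments your regeneration argument proves the theorem, and in fact fills in a step the paper leaves implicit.
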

  \begin{proof}
  We  apply Theorem 6 in \cite{LV06}. We first of all show that our model fits into the definition of a random trip model as described by the five items given in \cite[Section IIa]{LV06}. By the definition of our model we have 1) domain: $ D\subset \R^2 $, 2) phase: there are no different phases in our model, so we can for example put $ \mathcal{I}=\{\text{walking}\} $ as the only phase, 3) path: all paths in the SRW model are continuous mappings from $ [0,1] $ to $ D $ given by
  \[
  p_n(u)=W_n+(W_{n-1}-W_n)(1-u)
  \]
  which have continuous derivatives except at change points.
  4) Trip: the position of $ X(t) $ of the mobile node at time $ t $ is defined 
  \[
  X(t)=p_{M(t)} \left(\frac{t-T_{M(t)}}{S_{M(t)}}\right)
  \]
  where $ S_{M(t)}=\frac{d(W_{M(t)-1},W_{M(t)})}{V_{M(t)}} $ denotes the length of the trip the walker is taking at time $  t$, cf. \eqref{position}. 
  5) Default Initialization Rule: at time $ t=0 $ we place the walkers at their homes and draw the first waypoint $ W_1 $ according to $ \mu $, a velocity according to $ \nu $ and an alarm time according to $ \alpha_{1} $.\\
   Now, in order to obtain the time stationary distribution we have to check conditions (H1)-(H4) in \cite[Section IIb]{LV06}. (H1) and (H4) are clear by the assumptions made on our model, (H2) and (H3) work analogously to the case of the classical random waypoint model (cf. \cite{DFK16} section 3.1). Thus it only remains to show that $ \E\left[S_0\right]< \infty $.
  By our assumptions on $ \mu $ we get that $ \P(d(0,W_i)>t)\leq C t^{-\beta} $ for some $ \beta > 1 $,  and  $ t $ large enough (say $ t \geq T $ for some $ T < \infty $). Without loss of generality, we assume that the home of our walker is located at the origin, hence, $ W_0=0 $. Now
  \begin{align*}
  	\P(S_0>t)&=\P\bigl(d(0,W_1)>tV_0\bigr)\leq C
\cdot  \E\!\left[\left(t V_0\right)^{- \beta}\right]\leq 
C \int_{v_-}^{\infty}\biggl(tv\biggr)^{-\beta }f_{\nu}(v) \d v,
  \end{align*}
as we assumed $ \nu $ to be absolutely continuous.
  Thus, by Fubini's theorem $\E\left[S_0\right]=\int_{0}^{\infty}\P(S_0>t)\d t$ is bounded by
  \begin{align*}
  	\E\left[S_0\right]&\leq T+\frac{C}{v_-^{\beta}}\int_{T}^{\infty}\int^{\infty}_{v_-} t^{-\beta }f_{\nu}(v) \d v \d t\leq T+\frac{C}{(\beta-1)\,v_-^{2\beta-1}}\int_{v_-}^{\infty}f_{\nu}(v) \d v\\
  	&=T+\frac{C}{(\beta-1)\,v_-^{2\beta-1}}< \infty.
  \end{align*}
  By Theorem 6 in \cite{LV06} we get that $ \Phi(t):=(Y(t),S(t),\frac{(t-T_{N(t)-1})}{S(t)}) $, where $ S(t)=\frac{\lvert W_{N(t)-1}-W_{N(t)}\rvert}{V_{N(t)}} $, possesses unique time-stationary distribution $ \pi $ given by
  \begin{equation}
  	\pi(B)=\frac{\E^0\left[\int_{0}^{T_1}\mathds{1}\{\Phi(s)\in B\}\d s\right]}{\E^{0}\left[S_0\right]},\label{pi}
  \end{equation}
  where $ \E^0 $ denotes the "Palm expectation", which can be interpreted as the expectation conditional on the event that a transition occurs at time $ 0 $.
  \end{proof}

 \section{Percolation} \label{sec:percolation}
\subsection{Percolation under assumptions (A1), (A2) and (A$ \alpha) $}
In this section we will deal with \textit{percolation}, i.e. with the existence of an infinite large component of the network. In order to speak of infinite components we necessarily have to drop the assumption that $ D $ is a bounded domain and instead assume that $ D=\R^2 $.
 As we will see in Lemma~\ref{le:InfiniteComponent}, assumptions (A1),(A2) and (A$ \alpha $) ensure that a unique infinite component exists with probability one, if and only if the underlying PPP of homes is in the super-critical regime., i.e.  $ \lambda > \lambda_c $ so that there exists a unique largest cluster of homes with probability one.
  \begin{lemma}\label{le:InfiniteComponent}
  	Under the assumptions on $ \mu $, $ \nu $ and $ \alpha $ given above, there exists a unique infinite component of walkers if and only if the homogeneous PPP of homes is of  intensity $ \lambda>\lambda_c $.
  \end{lemma}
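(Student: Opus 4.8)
The plan is to reduce the statement to classical continuum percolation for a homogeneous PPP by identifying the stationary walker configuration as exactly such a process. Throughout I work under the time-stationary distribution $\pi$ obtained in the previous theorem; its existence is precisely where assumption (A2) enters, since it guarantees $\E[S_0]<\infty$ and hence that each walker possesses a well-defined stationary law. Since $D=\R^2$, I first record that under (A1) the stationary displacement $\zeta^{(i)}:=X^{(i)}-h^{(i)}$ of walker $i$ from its home has a law that does not depend on $h^{(i)}$: relative to the home the waypoints are distributed as $\mu$, the velocities as $\nu$ and the alarm times as $\alpha$, none of which sees the absolute position of the home, so the whole trajectory relative to $h^{(i)}$ --- and therefore its stationary marginal --- is translation invariant.

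First I would argue that the stationary walker point process $\Xi=\{X^{(i)}\}$ is itself a homogeneous PPP of intensity $\lambda$. The displacements $\zeta^{(i)}$ are independent across $i$ (the walkers move independently) and, by the previous paragraph, identically distributed and independent of the home process $\eta$; moreover, because $\beta>1$ the displacement law is a proper probability measure on $\R^2$ with finite mean, so no mass escapes to infinity. Hence $\Xi$ is obtained from the homogeneous PPP $\eta$ of intensity $\lambda$ by an i.i.d., location-independent displacement, and the displacement theorem for Poisson processes yields that $\Xi$ is again a homogeneous PPP of the same intensity $\lambda$.

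With this identification the lemma becomes a statement about the Gilbert graph on the homogeneous PPP $\Xi$ of intensity $\lambda$, in which two points are joined whenever their distance is at most $2r$ (equivalently, the Boolean model of balls of radius $r$). By the classical theory of continuum percolation such a graph has, almost surely, a unique unbounded component precisely when $\lambda$ exceeds the critical intensity $\lambda_c$: existence of the threshold is Gilbert's theorem, and uniqueness follows from the ergodicity of the PPP under translations together with a Burton--Keane argument. Since the home process is itself a homogeneous PPP of intensity $\lambda$ with the same connection radius $2r$, its critical intensity is the same $\lambda_c$, which is exactly the regime in which the homes form a unique infinite cluster. This proves the claimed equivalence.

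The main obstacle I anticipate is the rigorous justification of the second paragraph, namely that the stationary walker configuration really is a displaced Poisson process to which the displacement theorem applies. Concretely one must verify three points with care: that the stationary displacements are genuinely i.i.d.\ (this uses the independence of the individual motions together with (A1) for equality in law); that they are independent of the positions of the homes (again (A1), and crucially $D=\R^2$, so that no boundary breaks the translation invariance of the displacement law); and that each displacement is almost surely finite, so that the intensity is preserved rather than diminished (this is where the finite-mean consequence of (A2), $\beta>1$, is used). Once these hypotheses are checked, the appeal to the displacement theorem and to continuum percolation is routine, and both directions of the equivalence follow simultaneously.
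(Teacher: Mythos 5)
Your proposal is correct and follows essentially the same route as the paper: both arguments identify the walker configuration as an independent, translation-invariant displacement of the home process (using (A1) so that the displacement kernel is the same for every walker) and invoke the displacement theorem for Poisson processes to conclude that the walkers again form a homogeneous PPP of intensity $\lambda$, whence percolation of walkers is equivalent to percolation of homes. Your write-up is in fact somewhat more careful than the paper's, spelling out the i.i.d.\ structure, the independence from the home locations, and the role of (A2) in guaranteeing a proper stationary displacement law, as well as citing the uniqueness argument for the infinite cluster, which the paper leaves implicit.
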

  \begin{proof}
  		Let $ \eta_h $ denote the homogeneous PPP of homes of intensity $ \lambda $  on $ D=\mathbb{R}^2 $.
  		Under all our assumptions the PPP $ (\eta_s^W) $ of walkers at time $ s $ can be seen as a random translation of the home process $ \eta_h $, such that each home is displaced by a random vector with probability density function $p(x,\cdot) $. Notice that when looking from the homes of the walkers all of them possess the same waypoint measure $ \mu $, thus the probability kernel $ p $ is the same for all walkers. As each walker moves independently from all others also the displacements happen independently. Following the displacement theorem in  \cite[p.61]{kingman1992poisson} the new displaced point process $ \eta^{h,d}=\eta^W $ is again a PPP with intensity function 
  		\begin{equation*}
  		\lambda_d(y)=\int_{\R^d}\lambda p(x,y)dy = \lambda.
  		\end{equation*}
  		Thus the existence of an infinite component of walkers is equivalent to the existence of an infinite component of homes.
  \end{proof}

\subsection{Percolation -- Towards a Phase Transition}

In this section we follow the idea that on the one hand -- in the case that walkers do not return to their homes very often
-- the Sedentary Random Waypoint model shows the same behaviour as the classical Random Waypoint model, and on the other hand -- for  the walkers favouring short trips around their homes-- the connectivity behaviour rather resembles that of the random geometric graph.
For this section, we specify the model further in order to have a parameter classifying these two extremes.
Let $\eta_h$ be a homogeneous Poisson point process in the window $W=[0,a]^2\subset\R^2$. 
%
The decision where to go next will be made according to the following rule: with probability $ p $ the walker chooses to take a long trip, more precisely the next waypoint will be uniformly distributed on $W\backslash \{B_R(h^{(i)})\}$. With probability $ (1-p) $ the walker stays close to home and the waypoints are uniformly chosen in a ball of radius $ R $ around the home. Fix the waypoint radius $R$.
For $ p=0 $ we are then close to the random geometric graph (RGG), whereas for $ p=1 $ we almost get the RWP model, so that our model interpolates between the two.

For the RGG it is well known, e.g. \cite{LastPenrose}, that depending on the communication radius $r$ there exists a critical value $\lambda_c(r)$ such that the model is in the supercritical phase, i.e. percolation occurs a.s. for all $\lambda>\lambda_c(r)$.

At time $ s $, we define $ \eta_s $ as the point process of walkers at time $ s $. We then delete all walkers that are not close to their home at time $s$ and time $s-1$ . We denote the resulting, thinned process of intensity $ (1-p)^2\lambda $ by $ \eta_s^{(t)} $.
Comparing this point process to the RGG with parameter $r-\frac{R}{2}$, we see that the random graph is in the supercritical regime if
\[
(1-p)^2\lambda > \lambda_c\Bigl(r-\frac{R}{2}\Bigr).
\]
This proves the existence of an crossing component for $p\to 0$ for $\lambda$ satisfying
$\lambda>\lambda_c\Bigl(r-\frac{R}{2}\Bigr)$.

For the other extreme, $p\to 1$, we fix a window $W_a=[0,a]^2$ and study the limit $a\to\infty$. Following \cite{Bettstetter} the (approximate) spatial distribution of points is given by
\[
f(x,y)\approx \frac{36}{ a^6} ~\Big(x^2-ax\Big)\Big(y^2-ay\Big).
\]
We introduce the measure $ \mathbb{Q}_a:\mathcal{B}(\R^2) \rightarrow \R$ given by
\[
\mathbb{Q}_a(B)=\int_B \mathbf{1}\{(x,y)\in W_a\} f(x,y)\, d(x,y).
\]
Denote by
$S_\delta= [\delta, a] \times [0, a]$ a stripe of width $a- \delta $ on the right border of $ W_a $. Note that  the density of the spatial distribution is decreasing in $ x $  for $ x \geq \frac{a}{2} $ and that, for fixed $ x  $, $ f(x,y) $ is maximal for $ y=\frac{a}{2}$. With these considerations we see that for all $ (x,y) \in S_\delta $ the density $ f(x,y) $ is uppper bounded by $ f(\delta, \frac{a}{2}) $. This holds especially for $ \delta= \sqrt{a} .$ Hence, the density of walkers in
$ [\sqrt{a},a] \times [0,a] $ can be bounded by 
\[
f(\sqrt{a},\frac{a}{2})\approx 9\frac{a^3(\sqrt{a}-1)}{a^6}=9 \frac{\sqrt{a}-1}{a^3}.
\]
Remember, that the underlying point process $ \eta_h $ is assumed to be a homogeneous PPP of intensity $ \lambda $, i.e.~the expected number $ \E\left[N_{W_a}\right] $ of points in $ W_a $ is given by $ \lambda a^2 $. If we now consider the number  $ N_{\delta} $ of points in $ S_{\delta} $ we obtain
\begin{align*}
\E\left[N_{\delta}\right]=\E\left[\E\left[N_\delta\vert N_{W_a}\right]\right]&=\E\left[\sum_{k=1}^{N_{W_a}}\mathbf{1}\{X_k\in S_\delta\}\right]=\E\left[N_{W_a}\right]\P(X_k\in S_\delta)\\
&=\lambda a^2\mathbb{Q}(S_\delta)=  \lambda a^2 \int_{S_\delta} f(x,y)\, d(x,y).
\end{align*}
Hence the point process of walkers in $ S_{\sqrt{a}} $ is stochastically dominated by a homogeneous PPP of intensity 
\[
\E\left[N_{\sqrt{a}}\right]/\text{vol}_2(N_{\sqrt{a}})\leq 9\lambda \int_{\sqrt{a}}^{a}\int_{0}^{a}\frac{1}{\sqrt{a}} d(x,y)/\text{vol}_2(N_{\sqrt{a}}) = \frac{9\lambda}{\sqrt{a}}.
\]
Thus, for $a\rightarrow \infty$, there are a.s. no points on the borders of the observation window, so that no percolation occurs.\\
\\


Is one on the other hand not interested in covering the regions but in reachability among each other, then one can analogously bound the density in the square $\bigl[\frac{a}{2}-\sqrt{a},\frac{a}{2}+\sqrt{2}\bigr]^2$ from below by
\[
 f(x,y)\geq f\bigl(\frac{a}{2}-\sqrt{a},\frac{a}{2}-\sqrt{2}\bigr)
 \approx \frac{36}{a^6} \bigl(\frac{a^4}{16} - \frac{a^3}{2}+a^2\bigr).
\]
So the process stochastically dominates the PPP with intensity
\[
 \lambda \cdot a^2 \cdot \frac{36}{a^6} \bigl(\frac{a^4}{16} - \frac{a^3}{2}+a^2\bigr)
 = \frac{9}{4} \lambda - \frac{18 \lambda}{a} + \frac{\lambda}{a^2}.
\]
This shows, that if $\frac{9}{4} \lambda > \lambda_c(r)$ and $a$ big enough, an infinite component exists in the center.\\
\\
\\
 \bibliographystyle{alpha} 
 \bibliography{MobileNetworks}
 
%
%
%


\end{document}